\theoremstyle{plain}
\numberwithin{equation}{section}
\newtheorem{thm}{Theorem}[section]
\newtheorem{theorem}[thm]{Theorem}
\newtheorem{corollary}[thm]{Corollary} % additional proclamation for this paper
\theoremstyle{definition}
\newtheorem{conjecture}[thm]{Conjecture}  % additional proclamation for this paper
\begin{document}
\fancyhead{}
\renewcommand{\headrulewidth}{0pt}
\fancyfoot{}
\fancyfoot[LE,RO]{\medskip \thepage}
\fancyfoot[LO]{\medskip MISSOURI J.~OF MATH.~SCI., SPRING 2022}
\fancyfoot[RE]{\medskip MISSOURI J.~OF MATH.~SCI., VOL.~34, NO.~1}

\setcounter{page}{1}

\title{Distribution of Square-Prime Numbers}
\author{Raghavendra N Bhat}
\address{\newline Raghavendra N Bhat\newline 
University of Illinois, Urbana Champaign \newline
Department of Mathematics\newline
1409 West Green Street\newline
Urbana, IL 61801
}
\email{rnbhat2@illinois.edu (Corresponding Author)}

%-------------------------------------------------------

\begin{abstract}

%-------------------------------------------------------

For $a \neq 1$ and $p$ prime, we define numbers of the form $pa^2$ to be Square-Prime (SP) Numbers. 
For example, 75 = 3 $\cdot$ 25; 108 = 3 $\cdot$ 36; 45 = 5 $\cdot$ 9. 
These numbers are listed in the OEIS as A228056. 
We study the properties of these numbers, their distribution/density and also develop a few claims on their distribution/density. 
We rely on computer programs to verify some conjectures up to large numbers.
\end{abstract}
\maketitle

%-------------------------------------------------------

\section{Introduction}

%-------------------------------------------------------

SP Numbers can be generated in linear time allowing us to study some interesting properties and generalize a few conjectures and claims that would be more difficult to prove for the set of prime numbers. For example, there are many unsolved problems along the lines of Goldbach and twin prime conjectures. The intention of this paper is to study SP numbers, their density with respect to primes and to also study their distribution on the number line.

%-------------------------------------------------------

\section{Terminologies, Axioms and Trivial Results}

%-------------------------------------------------------

\begin{enumerate}
\item There are infinitely many SP Numbers. This is trivially proved owing to the existence of infinite primes and infinite squares.
\item  There can be consecutive natural numbers who are both SP. An example is 27 and 28 (27 is $3 \cdot 9$ and 28 is $7 \cdot 4$) 
\item The product of two SP Numbers is always non-SP.
\item An SP gap is defined as the difference between two consecutive SP Numbers.
\item Consecutive natural numbers that are both SP are called SP Twins. These SP pairs have gap 1.
\end{enumerate}

\noindent\rule{0.84in}{0.4pt} \par
\medskip
\indent\indent {\fontsize{8pt}{9pt} \selectfont DOI: 10.35834/YYYY/VVNNPPP \par}
\indent\indent {\fontsize{8pt}{9pt} \selectfont MSC2020: 11N25, 11P99 \par}
\indent\indent {\fontsize{8pt}{9pt} \selectfont Key words and phrases: generalization of primes, distribution of prime-like numbers \par}

\thispagestyle{fancy}

\vfil\eject
\fancyhead{}
\fancyhead[CO]{\hfill DISTRIBUTION OF SQUARE-PRIME NUMBERS}
\fancyhead[CE]{R.~N.~BHAT  \hfill}
\renewcommand{\headrulewidth}{0pt}

%-------------------------------------------------------

\section{First Few SP Numbers}

%-------------------------------------------------------

Here are the first 100 SP numbers:
\begin{align*}
&8,12,18,20,27,28,32,44,45,48,50,52,63,68,72,75,76,80,92,98,99,108,\\
&112,116,117,124,125,128,147,148,153,162,164,171,172,175,176,180,\\
&188,192,200,207,208,212,236,242,243,244,245,252,261,268,272,275,\\
&279,284,288,292,300,304,316,320,325,332,333,338,343,356,363,368,\\
&369,387,388,392,396,404,405,412,423,425,428,432,436,448,450,452,\\
&464,468,475,477,496,500,507,508,512,524,531,539,548,549.
\end{align*}

%-------------------------------------------------------

\section{Theorems on SP Numbers}

%-------------------------------------------------------

\begin{theorem}
For all natural numbers $n$ large enough, the number of SP Numbers smaller than $n$ is asymptotic to $(\zeta(2)-1)\frac{n}{\log n}$.
\end{theorem}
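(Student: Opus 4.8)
The plan is to reduce the counting problem to the Prime Number Theorem (PNT) by exploiting the essentially unique representation of each SP number. First I would observe that if $m = pa^2$ with $p$ prime and $a \ge 2$, then the squarefree part of $m$ (the product of the primes dividing $m$ to an odd power) is exactly $p$, whether or not $p \mid a$; consequently $p$ is determined by $m$, and then $a = \sqrt{m/p}$ is determined as well. Hence the map $(p,a)\mapsto pa^2$ is a bijection from $\{(p,a): p \text{ prime},\ a \ge 2\}$ onto the set of SP numbers, so counting SP numbers below $n$ is the same as counting admissible pairs. This yields the exact identity
\[
N(n) := \#\{m \le n : m \text{ is SP}\} = \sum_{a \ge 2}\pi\!\left(\frac{n}{a^2}\right),
\]
where $\pi$ is the prime-counting function and only the terms with $a \le \sqrt{n/2}$ are nonzero (since $\pi(x)=0$ for $x<2$).

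Next I would estimate this sum by splitting it at a cutoff $Y = Y(n)$ chosen to grow faster than $\log n$ but slower than every power of $n$, for instance $Y = \exp(\sqrt{\log n})$. For the small-$a$ range $2 \le a \le Y$ the argument satisfies $n/a^2 \ge n/Y^2 \to \infty$, so PNT gives $\pi(n/a^2) = (1+o(1))\,(n/a^2)/\log(n/a^2)$ uniformly in $a$; moreover $\log(n/a^2) = \log n - 2\log a$ with $\log a \le \log Y = o(\log n)$, so $1/\log(n/a^2) = (1+o(1))/\log n$ uniformly across this range. Summing and using $\sum_{a \ge 2} a^{-2} = \zeta(2)-1$ (the discarded tail being $O(1/Y)=o(1)$) produces the main term
\[
\sum_{2 \le a \le Y}\pi\!\left(\frac{n}{a^2}\right) = (1+o(1))\,(\zeta(2)-1)\,\frac{n}{\log n}.
\]

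For the large-$a$ range $Y < a \le \sqrt{n/2}$ I would use only the crude bound $\pi(x) \le x$, giving
\[
\sum_{Y < a \le \sqrt{n/2}}\pi\!\left(\frac{n}{a^2}\right) \le n\sum_{a > Y}\frac{1}{a^2} \le \frac{n}{Y} = o\!\left(\frac{n}{\log n}\right),
\]
since $Y/\log n \to \infty$. Combining the two ranges then gives $N(n) \sim (\zeta(2)-1)\,n/\log n$, as claimed.

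I expect the main obstacle to be the uniformity in the middle step rather than any single estimate: one must choose the cutoff $Y$ so that simultaneously (i) $n/Y^2 \to \infty$, so that PNT applies with an error uniform throughout $a \le Y$; (ii) $\log Y = o(\log n)$, so that replacing $\log(n/a^2)$ by $\log n$ costs only a factor $1+o(1)$; and (iii) $Y/\log n \to \infty$, so that the crude tail bound is negligible against $n/\log n$. The choice $Y = \exp(\sqrt{\log n})$ satisfies all three, but verifying that the $o(1)$ from PNT can be taken uniform over the growing range $a \le Y$ — in effect interchanging a limit with an infinite sum — is the step that requires the most care.
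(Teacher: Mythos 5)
Your proposal is correct and follows essentially the same route as the paper: write $SP(n)=\sum_{a\ge 2}\pi(n/a^2)$, split at a cutoff, apply the Prime Number Theorem uniformly on the head to extract $(\zeta(2)-1)\,n/\log n$, and bound the tail trivially. The only differences are cosmetic — you take the cutoff $\exp(\sqrt{\log n})$ instead of the paper's $\log^2 n$ (so you get $o(n/\log n)$ rather than the paper's $O(n/\log^2 n)$ error, which still suffices for the stated asymptotic), and you add a welcome justification, via the squarefree part, that the map $(p,a)\mapsto pa^2$ is injective, a point the paper uses without comment.
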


\begin{proof}
From the prime number theorem \cite[pp.~111--115]{Davenport}, we know that the number of prime numbers $\leq$ $n$ equals $\frac{n}{\log n} + O(\frac{n}{\log^2 n})$.
Let $SP(n)$ denote the number of SP numbers smaller than $n$. $SP(n)$ is equal to the number of pairs $(a,p)$ where $a^2p$ $\leq$ $n$ where $p$ is a prime number and $a$ is a natural number $\geq$ 2. $SP(n)$ is
$$\sum_{a=2}^{\sqrt{n/2}}\pi\left(\frac{n}{a^2}\right).$$ Here, $\pi$ is the prime counting function. For arbitrary $A$ smaller than $n$, the head term is
$$\sum_{a=2}^{A}\pi\left(\frac{n}{a^2}\right).$$
The tail is
$$\sum_{a=A}^{\sqrt{n/2}}\pi\left(\frac{n}{a^2}\right).$$
Set $A$ to be $\log^2 n$. Thus, the head term is $$\sum_{a=2}^{\log^2 n}\pi\left(\frac{n}{a^2}\right).$$
We simplify this using PNT as follows:

$$n\sum_{a=2}^{\log^2 n}\frac{1}{a^2(\log n-\log (a^2))}$$
$$=\frac{n}{\log n}\sum_{a=2}^{\log^2 n}\frac{1}{a^2(1-\frac{2\log a}{\log n})}.$$
Using the properties of geometric series, this simplifies to
$$\frac{n}{\log n}\sum_{a=2}^{\log^2 n}\frac{1}{a^2}\left(1+O\left(\frac{\log a}{\log n}\right)\right)$$

$$=\frac{n}{\log n} \sum_{a=2}^{\log^2 n}\frac{1}{a^2} + O\left(\frac{n}{\log^2 n}\right).$$ \newline
Simplifying the main term, yields
$$\frac{n}{\log n}\sum_{a=2}^{\infty}\frac{1}{a^2} - \frac{n}{\log n}\sum_{a=\log^2n}^{\infty}\frac{1}{a^2}$$
$$=\frac{n}{\log n}\sum_{a=2}^{\infty}\frac{1}{a^2} + O\left(\frac{n}{\log^2 n}\right)$$
$$=(\zeta(2) - 1)\frac{n}{\log n} + O\left(\frac{n}{\log^2 n}\right).$$
Hence we have the head of the original asymptotic to be
$$=(\frac{\pi^2}{6} - 1)\frac{n}{\log n} + O\left(\frac{n}{\log^2 n}\right).$$
Simplifying the tail yields
$$\sum_{a=\log^2n}^{\sqrt{n/2}}\pi(\frac{n}{a^2}) \leq n\sum_{a=\log^2n}^{\infty}\frac{1}{a^2(\log n-\log (a^2))}=O\left(\frac{n}{\log^2 n}\right).$$
Finally, we have$$SP(n)=(\zeta(2) - 1)\frac{n}{\log n} + O\left(\frac{n}{\log^2 n}\right).$$ This completes the proof.
\end{proof}

\begin{corollary}
For a sufficiently large $n$, the number of prime numbers smaller than $n$ is more than the number of SP numbers smaller than $n$.
\end{corollary}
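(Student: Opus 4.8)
The plan is to compare the two asymptotic formulas directly, since both $\pi(n)$ and $SP(n)$ are now known to leading order. From the Prime Number Theorem I have $\pi(n) = \frac{n}{\log n} + O\left(\frac{n}{\log^2 n}\right)$, while the preceding theorem supplies $SP(n) = (\zeta(2)-1)\frac{n}{\log n} + O\left(\frac{n}{\log^2 n}\right)$. The entire argument reduces to showing that the leading coefficient of $\pi(n)$ exceeds that of $SP(n)$.

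First I would subtract the two expansions to obtain
\[
\pi(n) - SP(n) = \bigl(2 - \zeta(2)\bigr)\frac{n}{\log n} + O\left(\frac{n}{\log^2 n}\right).
\]
The crucial numerical fact is that $\zeta(2) = \pi^2/6 \approx 1.6449 < 2$, so the coefficient $2 - \zeta(2) \approx 0.3551$ is strictly positive. This is what makes the comparison go through: the two densities differ already at the level of the leading term, not merely in the lower-order corrections.

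Finally I would argue that the positive main term dominates the error. Since $\frac{n}{\log^2 n} = o\left(\frac{n}{\log n}\right)$, there exist a constant $C$ and a threshold $N$ such that the combined error term is bounded in absolute value by $C\,\frac{n}{\log^2 n}$, which is in turn smaller than $(2-\zeta(2))\frac{n}{\log n}$ as soon as $\log n > C/(2-\zeta(2))$. Hence $\pi(n) - SP(n) > 0$ for all sufficiently large $n$, which is exactly the assertion of the corollary.

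I expect essentially no obstacle here once the theorem is in hand: the statement is an immediate consequence of the inequality $\zeta(2) < 2$. The only point requiring any care is to confirm that the two $O$-terms may be combined and absorbed uniformly, but this is routine because they share the same order of magnitude $\frac{n}{\log^2 n}$, and absorbing one constant into another presents no difficulty.
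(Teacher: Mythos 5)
Your argument is correct and is essentially the paper's own proof: both rest on the single numerical fact that the leading coefficient $\zeta(2)-1=\pi^2/6-1$ of $SP(n)$ is strictly less than the coefficient $1$ of $\pi(n)$, so the difference $\pi(n)-SP(n)$ has positive main term $(2-\zeta(2))\frac{n}{\log n}$ dominating the $O\left(\frac{n}{\log^2 n}\right)$ errors. The paper states this in one line; your version merely makes the absorption of the error terms explicit, which is a harmless (and slightly more careful) elaboration of the same idea.
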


\begin{proof}
Since $\frac{\pi^2}{6} - 1$ is less than 1, we have $SP(n)$ $\leq$ $\pi(n)$.
\end{proof}

\begin{theorem}
There are infinitely many pairs of SP numbers for any existing SP gap.\\
\end{theorem}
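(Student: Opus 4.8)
The plan is to interpret the statement as follows: if some value $g$ occurs as the difference $N'-N$ of two consecutive SP numbers $N<N'$, then infinitely many consecutive SP pairs realize the same difference $g$. Writing the base pair as $N=pa^{2}$ and $N'=qb^{2}$ with $p,q$ prime and $a,b\ge 2$, my first move is to freeze the square parts $a^{2},b^{2}$ inherited from this pair and let only the primes vary, so that the gap condition $N'-N=g$ collapses to the single linear equation $qb^{2}-pa^{2}=g$ in the unknown primes $p,q$.

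First I would solve $qb^{2}-pa^{2}=g$ as a linear Diophantine equation. The base pair already furnishes a solution $(p_0,q_0)$, so $d:=\gcd(a^{2},b^{2})$ divides $g$ and the full solution set is the progression $p=p_0+(b^{2}/d)\,t$, $q=q_0+(a^{2}/d)\,t$ with $t$ a nonnegative integer. For each $t$ the two candidates are $N_t=p_0a^{2}+(a^{2}b^{2}/d)\,t$ and $N_t+g$, both advancing by the common amount $(a^{2}b^{2}/d)\,t$, so their difference is pinned at $g$ for every $t$ by construction. The theorem then reduces to producing infinitely many $t$ for which simultaneously (i) the two linear forms $(b^{2}/d)\,t+p_0$ and $(a^{2}/d)\,t+q_0$ are both prime, and (ii) no SP number lies strictly between $N_t$ and $N_t+g$, so that the pair is genuinely consecutive.

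The decisive difficulty is condition (i): forcing two linear forms in the single parameter $t$ to take prime values at the same time. This is precisely a prime $2$-tuple problem, and it lies beyond what is provable unconditionally today---taking $a=b$ already specializes it to Polignac/twin-type statements, and with $g=1$ it becomes the existence of infinitely many consecutive-integer SP twins such as $27,28$ and $44,45$. I therefore would not expect an unconditional proof along this route. What I can deliver is the statement conditional on the Hardy--Littlewood/Dickson prime-tuple conjecture: after checking that the pair $\big((b^{2}/d)\,t+p_0,\ (a^{2}/d)\,t+q_0\big)$ is admissible (no fixed prime divides the product for all $t$, which one verifies from the base values $p,q$; should a fixed prime obstruction appear, I would instead also vary $a,b$ to restore admissibility), the singular series yields $\gg T/\log^{2}T$ admissible $t\le T$, in particular infinitely many.

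Condition (ii) is secondary but not vacuous, since by the density asymptotic $SP(n)\sim(\zeta(2)-1)\,n/\log n$ proved above an individual offset $N_t+j$ is itself SP along a set of $t$ of comparable sparseness. However, the event ``$N_t+j$ is SP'' occurs with conditional likelihood of order $1/\log T$ relative to a fixed $t$, so imposing, for each $1\le j\le g-1$, that $N_t+j$ avoid SP values removes only a vanishing proportion (of order $1/\log T$) of the $\gg T/\log^{2}T$ solutions produced in step (i), still leaving infinitely many. Because the joint-primality input is conjectural, the honest fallback---matching the computational spirit of this paper---is to record the singular-series prediction as the expected growth rate of consecutive SP pairs with gap $g$ and to confirm the infinitude numerically for the small gaps at issue. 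To be clear, the genuine mathematical obstacle sits entirely in step (i): everything else is bookkeeping around a prime-tuple conjecture that current technology cannot remove.
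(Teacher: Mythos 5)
Your reduction is set up the wrong way around, and that is why you land on an intractable prime-tuple problem. You freeze the square parts $a^{2},b^{2}$ and try to vary the primes, which forces two linear forms in $t$ to be simultaneously prime --- unprovable with current technology, as you correctly observe, so your argument only yields a conditional statement and does not prove the theorem. The paper does the opposite: it freezes the two primes. Starting from the known representation $P_1a^{2}-P_2b^{2}=g$ of the gap, multiply by $P_1$ to get $(P_1a)^{2}-(P_1P_2)(b)^{2}=P_1g$, a generalized Pell equation $x^{2}-(P_1P_2)y^{2}=P_1g$ that already has one integer solution and hence (by the classical theory of the Pell equation, cited to Mordell) has infinitely many. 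In every solution $P_1$ divides $x$, so writing $x=P_1k$ and dividing through by $P_1$ returns $P_1k^{2}-P_2y^{2}=g$: infinitely many pairs of SP numbers at distance $g$, with no primality to manufacture at all. The missing idea is thus not a stronger conjecture but a different choice of which parameters to vary --- the square parts live on a quadric with an infinite group of automorphs, while the primes do not.

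Two further points. First, your condition (ii) (that each produced pair be \emph{consecutive} among SP numbers) reads more into the statement than the paper's proof delivers: the paper only exhibits infinitely many SP pairs whose difference equals an occurring gap $g$, and does not address consecutiveness; your density heuristic for (ii) is in any case not a proof, since the events you are excluding are not shown to be independent of the tuple condition. Second, even granting the tuple conjecture, your admissibility repair (``vary $a,b$ to restore admissibility'') is left unexecuted, so the conditional route is itself incomplete as written. If you want an unconditional argument, abandon the linear Diophantine framing and use the Pell structure.
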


\begin{proof}
We use Gauss's result on the general Pell equation \cite[Chapter 8, Theorem 2 on p.~57]{Mordell} that states that $x^2 - dy^2 = m$ has infinitely many solutions, if it has one solution.
Thus, if a gap $g$ occurs, we have $P_1, P_2, a, b$ such that 
$$P_1a^2 - P_2b^2 = g.$$
Multiplying by $P_1$ yields
\[
P_1a^2 - (P_1P_2)b^2 = P_1g.
\]
We now have a general Pell equation
\begin{equation} \label{E:Pellform}
x^2 - (P_1P_2)y^2 = P_1g 
\end{equation}
that has a solution. Thus, it has infinitely many solutions. In each solution, $P_1$ divides $(P_1P_2)y^2$ and $P_1g$. Hence, it also divides $x$. Let $x=P_1k$. 
Thus, dividing every solution to \eqref{E:Pellform} by $P_1$ yields $$P_1k^2 - P_2y^2 = g.$$ 
Since $P_1k^2$ and $P_2y^2$ are SP Numbers which have gap $g$, the proof is complete.
\end{proof}

As an immediate corollary, we have the existence of infinite SP twins, our existing solution being 27 and 28.

%-------------------------------------------------------

\section{Conjectures on SP Numbers}

%-------------------------------------------------------

These conjectures are all verified computationally beyond numbers in excess of $10^9$. We look at these claims briefly to get some ideas for future research.

\begin{conjecture}
Every large natural number is a sum of two SP Numbers. \end{conjecture}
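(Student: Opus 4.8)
The plan is to attack this through the Hardy--Littlewood circle method, treating it as a binary additive problem directly analogous to the Goldbach conjecture. Write $e(x) = e^{2\pi i x}$, and for a target $N$ let $r(N)$ denote the number of representations $N = p_1 a^2 + p_2 b^2$ with $p_1, p_2$ prime and $a, b \ge 2$. Introducing the exponential sum over SP numbers up to $N$,
\[
S(\alpha) = \sum_{2 \le a \le \sqrt{N/2}} \; \sum_{p \le N/a^2} e(a^2 p\, \alpha),
\]
we have $r(N) = \int_0^1 S(\alpha)^2 e(-N\alpha)\, d\alpha$, and the conjecture amounts to showing $r(N) > 0$ for all large $N$. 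The key structural feature I would exploit is that each SP number has a \emph{unique} decomposition $p a^2$ (the prime $p$ is the single prime occurring to an odd power), so $S$ counts without multiplicity, and the inner sum over $p$ is a classical prime exponential sum evaluated at the dilated point $a^2 \alpha$. The whole problem thus decomposes into a controlled superposition of Goldbach-type exponential sums indexed by the square parameter $a$.

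First I would run the standard major/minor arc dissection. On the major arcs surrounding rationals $c/q$ with small $q$, I would estimate each inner prime sum by the prime number theorem in arithmetic progressions (Siegel--Walfisz) and then sum over $a$. This should yield a main term of the shape $\mathfrak{S}(N)\,\mathfrak{J}(N)$, where the singular integral $\mathfrak{J}(N)$ is of order $N/\log^2 N$ (reflecting the convolution of two prime-like sets each of density $\asymp n/\log n$, the $a$-summation contributing a bounded constant built from $\sum_a a^{-2}$ exactly as in Theorem~4.1), and $\mathfrak{S}(N)$ is a singular series encoding the local densities of solutions to $N \equiv a^2 p + b^2 q \pmod{q}$. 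A necessary preliminary step is to verify that $\mathfrak{S}(N)$ is bounded away from zero for every large $N$: since the subfamily $a=b=2$ already reduces to $N \equiv 4(p+q)$ and the free choice of $a,b$ removes congruence obstructions at small primes, I expect no local obstruction and a uniformly positive singular series.

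The decisive difficulty is the minor arcs, and here I would be candid that this is where the problem becomes genuinely as hard as binary Goldbach. For a binary convolution the pointwise Vinogradov bound on $S(\alpha)$ is not strong enough: one has only $\int_0^1 |S(\alpha)|^2\, d\alpha = SP(N) \asymp (\zeta(2)-1)\,N/\log N$, which controls the minor arcs on average but not for each individual $N$. Consequently the realistic unconditional outcome of this method is the ``almost all'' statement---that the number of $N \le X$ \emph{not} representable as a sum of two SP numbers is $o(X)$---obtained by bounding the mean square $\sum_{N \le X} |r(N) - \mathfrak{S}(N)\mathfrak{J}(N)|^2$ via Parseval and the large sieve, in the manner of the classical Chudakov--Estermann--van der Corput theorem for Goldbach. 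Upgrading this to \emph{every} large $N$ is the main obstacle, and I do not expect it to be within reach of current technology: the square parameter $a$ adds flexibility in the available residue classes but does not lower the effective additive dimension below two, so the exceptional set cannot be eliminated by these arguments alone. One conditional route worth recording is that, restricted to $N \equiv 0 \pmod 4$, the representation $N = 4p_1 + 4p_2$ shows the conjecture for such $N$ follows from the ordinary binary Goldbach conjecture applied to $N/4$.
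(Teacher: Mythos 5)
The statement you are trying to prove is stated in the paper as a \emph{conjecture}: the author offers no proof, only computational verification up to $10^9$ and the observation that it appears to hold for all $n \ge 3931$. So there is no ``paper proof'' to match yours against, and the real question is whether your argument closes the problem. It does not, and you say so yourself: the entire weight of a binary additive problem falls on the minor arcs, where the pointwise information available on $S(\alpha)$ (Vinogradov-type bounds, or the $L^2$ identity $\int_0^1 |S(\alpha)|^2\,d\alpha = SP(N)$) is insufficient to beat the main term for each individual $N$. Your setup is sound as far as it goes --- the decomposition $pa^2$ is indeed unique (the prime $p$ is the squarefree kernel of an SP number, so $S$ counts without multiplicity), the expected main term $\mathfrak{S}(N)\mathfrak{J}(N) \asymp N/\log^2 N$ is the right heuristic, and the reduction of the $N \equiv 0 \pmod 4$ case to binary Goldbach via $N = 4p_1 + 4p_2$ is correct but conditional on an equally open conjecture. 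None of this constitutes a proof of the statement; what your method can realistically deliver unconditionally is an ``almost all'' theorem (the exceptional set of non-representable $N \le X$ has size $o(X)$), which is strictly weaker than the conjecture as stated.

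In short: the gap is not a fixable oversight but the central difficulty, namely an unconditional treatment of $S(\alpha)$ on the minor arcs strong enough to give $r(N) > 0$ for every large $N$. Since the paper itself presents this only as a computationally supported conjecture, the honest framing of your write-up would be as a proposed programme (with a provable almost-all version and a conditional mod-$4$ case), not as a proof. One smaller point worth cleaning up if you develop the almost-all version: in your description of the singular series the congruence ``$N \equiv a^2 p + b^2 q \pmod q$'' reuses $q$ both as a prime variable and as the modulus of the major arc; the local densities should be computed modulo the arc denominator, with $a, b$ ranging over residues and $p$ over reduced residues.
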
The Goldbach conjecture is probably the most famous conjecture about primes. It claims that every even number greater than or equal to 4 can be written as a sum of two primes. However, for SP Numbers, as there are no even-odd parity restrictions, we claim that every sufficiently large natural number, whether even or odd, can be written as a sum of two SP Numbers. This has been verified beyond $10^9$ and is valid for all numbers starting $3931 (27 + 3904)$. E.g., 4041=116 + 3925, 10216=12 + 10204. \cite{Helfgott}

\begin{conjecture}
Every SP Number greater than 27 can be written as a sum of two SP Numbers in at least one way.\end{conjecture}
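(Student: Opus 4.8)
The cleanest logical route is to deduce this statement from the preceding conjecture, which asserts that every sufficiently large natural number is a sum of two SP numbers. Indeed, the SP numbers exceeding $27$ form a subset of the natural numbers, so if that earlier conjecture holds for all $n \geq N_0$, then every SP number beyond $N_0$ is automatically a sum of two SP numbers; the finitely many SP numbers in the interval $(27, N_0)$ can then be disposed of by direct computation, exactly the kind of verification the paper already reports out to $10^9$. The plan, therefore, is first to isolate an explicit threshold $N_0$ from whatever quantitative form of the sum-of-two-SP statement one can establish, and then to reduce to a finite check below $N_0$.

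To make unconditional headway on a positive-density family, I would next exploit the square flexibility that distinguishes SP numbers from primes. If an even target $N$ can be written as $N = d^2 E$ with $d \geq 2$ and $E$ even and $E \geq 4$, then any Goldbach decomposition $E = q_1 + q_2$ yields $N = d^2 q_1 + d^2 q_2$, a sum of two SP numbers. Writing a given SP number as $N = p a^2$ with $a \geq 2$, this device succeeds whenever $N$ carries a square factor $d^2$ leaving an even cofactor at least $4$; for instance, if $a$ is even and $a \geq 4$ one may strip off $(a/2)^2$ and reduce to the even Goldbach case for $4p$. The obstruction is twofold: boundary cases where the square and squarefree parts leave no usable even cofactor, such as $N = 4p$ and $N = 2q^2$ with $q$ prime, and, far more seriously, every odd SP number, for which no choice of $d$ can make $N/d^2$ even. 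Odd targets force a genuine mixed decomposition $N = (\text{even SP}) + (\text{odd SP})$, and here the scaling trick gives no leverage.

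For the odd numbers, and for a fully unconditional statement, the natural tool is the circle method applied to the SP indicator. The expected number of representations of $N$ as an ordered sum of two SP numbers is governed by the convolution of the local SP density $(\zeta(2)-1)/\log t$ with itself, giving a heuristic main term of order $N/\log^2 N$; since this grows without bound, representations should be plentiful for every large $N$. The hard part --- and I expect it to be the decisive obstacle --- is precisely the binary nature of the problem: one must show that the minor-arc contribution is negligible against this main term, and for a sum of only two sparse sequences this is exactly the barrier that has kept the binary Goldbach conjecture open. I would not expect the SP structure alone to circumvent it, so I regard an unconditional proof as requiring genuinely new input into binary additive problems, with the reduction of the first paragraph being the realistic deliverable.
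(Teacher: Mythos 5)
This statement is a conjecture, and the paper offers no proof beyond the remark that it follows from Conjecture 1 (restricted to SP numbers, with a lower starting point checked computationally); your first paragraph is exactly that reduction, so you have matched the paper's reasoning. Your further material on the scaling trick and the circle method is honest about being heuristic and correctly identifies the binary-Goldbach-type obstruction, so it does not overclaim anything the paper does not.
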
 This follows from conjecture 1 but takes effect with a lower starting point (although specifically restricted to only SPs and not all naturals). E.g., 153 = 28 + 125.

\begin{conjecture}
There exists at least one SP number between consecutive square numbers greater than 500.\end{conjecture}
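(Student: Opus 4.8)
The target is, for every $n$ with $n^2>500$ (equivalently $n\ge 23$, since $22^2=484<500<529=23^2$), to place at least one SP number in the open interval $(n^2,(n+1)^2)$, whose length is only $2n+1$. The small cases are covered by the computational verification already mentioned, so the content lies in the asymptotic regime, and my plan is to reduce the statement to a question about primes in short intervals. The cleanest reduction uses the simplest SP family, those with $a=2$, namely the numbers $4p$. One has
\[
4p\in(n^2,(n+1)^2)\quad\Longleftrightarrow\quad p\in\left(\frac{n^2}{4},\,\frac{(n+1)^2}{4}\right),
\]
and, writing $x=\frac{n^2}{4}$, the right-hand interval has length $\frac{2n+1}{4}\sim\sqrt{x}$. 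Thus it would suffice to produce, for each large $n$, a single prime in an interval of length $\sim\sqrt{x}$ centered near $x$; if that fails one may fall back on the fuller family $a\ge 2$, which supplies many further chances.

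Before attacking the hard analytic input, I would record the heuristic that makes the conjecture believable, using the asymptotic count proved earlier. The number of SP numbers in $(n^2,(n+1)^2)$ is
\[
\sum_{a\ge 2}\left[\pi\!\left(\frac{(n+1)^2}{a^2}\right)-\pi\!\left(\frac{n^2}{a^2}\right)\right]\sim(\zeta(2)-1)\frac{n}{\log n},
\]
obtained by approximating each summand by $\frac{2n+1}{a^2}\cdot\frac{1}{2\log n-2\log a}$ and summing the geometric-type series, exactly as in the proof that $SP(n)\sim(\zeta(2)-1)\frac{n}{\log n}$. Since this expected count tends to infinity, each interval should in fact contain \emph{many} SP numbers, so the conjecture ought to hold with a large margin; the task is to convert this average statement into one that holds for \emph{every} $n$.

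The main obstacle is precisely this last conversion, and it is severe. The single-interval demand for $a=2$ asks for a prime in a window of length $\sim\sqrt{x}=x^{1/2}$, which sits just below the reach of current technology: the Baker--Harman--Pintz theorem only guarantees a prime in $[x,x+x^{0.525}]$, and even the Riemann Hypothesis yields primes in all intervals only down to length $\gg\sqrt{x}\log x$, longer than $\sqrt{x}$ by a logarithmic factor. Passing to the union over $a\ge 2$ does not ease any individual window (larger $a$ gives windows of length $2\sqrt{x_a}/a$ with $x_a=n^2/a^2$, which are shorter relative to $\sqrt{x_a}$), so the real gain must come from having $\sim(\zeta(2)-1)\frac{n}{\log n}$ independent opportunities rather than from any one of them being easy; exploiting this rigorously requires a uniform short-interval prime estimate or a variance bound of Selberg type controlling the exceptional $n$. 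Since such an input is on par with Legendre's conjecture, I would present the statement either conditionally (on RH or a suitable short-interval hypothesis) or as a numerically verified conjecture, and I would flag the uniform short-interval distribution of primes as the crux that blocks an unconditional proof.
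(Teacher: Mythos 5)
This statement is presented in the paper only as a conjecture, supported by computational verification and the analogy with the corresponding (still open, Legendre-type) question for primes; no proof is given or claimed. Your assessment is correct and consistent with that: the expected-count heuristic $\sim(\zeta(2)-1)\frac{n}{\log n}$ follows from Theorem 4.1 by partial summation, and you rightly identify that every individual window $\bigl(n^2/a^2,(n+1)^2/a^2\bigr)$ has length about $2\sqrt{y}/a$ around $y=n^2/a^2$, which lies below the $y^{0.525}$ threshold of Baker--Harman--Pintz and even below the RH range $\sqrt{y}\log y$, so the statement must remain conjectural or conditional, exactly as the paper treats it.
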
 This is already conjectured for primes \cite{Bazzanella}. Computational verification shows that it works for SP Numbers as well. E.g., Between 625 and 676 we have 637, which is 49 $\cdot$ 13.

%-------------------------------------------------------

\section {On the Last Digit of SP Numbers}

%-------------------------------------------------------

Using the fact that primes are asymptotically equidistributed mod 10 (i.e. we have approximately the same number of primes ending in 1,3,7 and 9), we can come up with similar asymptotic estimates for SP numbers ending in 1,3,7 and 9. For example, if we wish to estimate the count of SP numbers ending in 1 (i.e. congruent to 1 mod 10), we wish to look at pairs $(p,a)$ such that $pa^2$ $\equiv$ 1 mod 10. $p$ can be $\equiv$ 1, 3, 7 or 9 mod 10. However, if $pa^2$ has to end in 1, $p$ cannot end in 3 or 7, because square numbers cannot end in 7 or 3 respectively. Thus we have two cases, $p$ ending in 1 or 9. When $p \equiv 1$ mod 10, we require $a^2$ to end in 1 (so that $pa^2$ ends in 1 too). Thus, $a$ can be $\equiv$ 1 or 9 mod 10.
Thus, the number of pairs $(p,a)$ such that $pa^2 \equiv$ 1 mod 10 with $p$ ending in 1 is
$$\frac{1}{4}\sum_{k=1}^{(n-1)/10}\pi\left(\frac{n}{(10k + 1)^2}\right) + \pi\left(\frac{n}{(10k + 9)^2}\right).$$
Using the fact that prime numbers are asymptotically distributed mod 10, this can be approximated by:
$$\frac{1}{4}\frac{n}{\log n}\sum_{k=1}^{\infty}\frac{1}{(10k + 1)^2} + \frac{1}{(10k + 9)^2}$$
$$=\frac{1}{400}\frac{n}{\log n}\sum_{k=1}^{\infty}\frac{1}{(k + 1/10)^2} + \frac{1}{(k + 9/10)^2}$$
$$=\frac{1}{400}\frac{n}{\log n}(\zeta(2,1/10) + \zeta(2,9/10) -2).$$
Here we use the Hurwitz zeta function which denotes $\sum_{k=0}^{\infty}\frac{1}{(k+c)^s}$ as $\zeta(s,c).$
Similarly, if $p$ ends in 9, we require $a^2$ to end in 9. Thus, $a$ can be $\equiv$ 3 or 7 mod 10.
Therefore, the asymptotic estimate for SP numbers ending in 1 with $p$ ending in 9 is the following:
$$\frac{1}{400}\frac{n}{\log n}(\zeta(2,3/10) + \zeta(2,7/10) -2).$$
Combining the above results, we have Theorem \ref{T:lastdigit}.

\begin{theorem} \label{T:lastdigit}
The asymptotic estimate of SP numbers ending in 1 is
$$\frac{1}{400}\frac{n}{\log n}(\zeta(2,1/10) + \zeta(2,9/10) + \zeta(2,3/10) + \zeta(2,7/10) - 4).$$
\end{theorem}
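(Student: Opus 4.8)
The plan is to count the admissible pairs $(p,a)$ with $pa^2\le n$ and $pa^2\equiv 1\pmod{10}$ directly, splitting the count by the residue of $p$ and evaluating the resulting series through the Hurwitz zeta function. The residue reduction is already in hand: since a perfect square is never congruent to $3$ or $7$ modulo $10$, the congruence $pa^2\equiv 1\pmod{10}$ can hold only in the two disjoint cases $p\equiv 1$ with $a\equiv 1,9$, and $p\equiv 9$ with $a\equiv 3,7$. The primes $2$ and $5$ contribute nothing, since $2a^2$ is even and $5a^2$ ends in $0$ or $5$. Writing $SP_1(n)$ for the number of SP numbers at most $n$ ending in $1$, these two cases partition the admissible pairs, so it suffices to estimate each contribution and add them.

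For a fixed admissible residue $r$ of $p$ and a fixed admissible residue of $a$, the inner count is $\pi(n/a^2;10,r)$, the number of primes up to $n/a^2$ in the class $r\bmod 10$. The analytic input is the prime number theorem for arithmetic progressions (Siegel--Walfisz for the fixed modulus $10$), which gives $\pi(x;10,r)=\frac{1}{\phi(10)}\frac{x}{\log x}+O\!\left(\frac{x}{\log^2 x}\right)=\frac14\frac{x}{\log x}+O\!\left(\frac{x}{\log^2 x}\right)$ uniformly over the four reduced classes; this is the origin of the factor $\tfrac14$ in the statement.

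The analytic core then repeats the argument of Theorem 4.1 class by class. I would truncate the outer sum over $a$ at $A=\log^2 n$, expand $\frac{1}{\log(n/a^2)}=\frac{1}{\log n}\bigl(1+O(\tfrac{\log a}{\log n})\bigr)$ on the head $a\le A$, and bound the tail $a>A$ by $O(n/\log^2 n)$ using $\sum_{a>A}a^{-2}=O(1/A)$. Completing the head sum to an infinite series, at the cost of a further $O(n/\log^2 n)$, isolates the main term $\frac{n}{4\log n}\sum_a a^{-2}$ with $a$ running over the relevant class. Re-indexing $a=10k+c$ and factoring out $100$ turns each class sum into $\frac{1}{100}\zeta(2,c/10)$ minus the finitely many initial terms that the constraint $a\ge 2$ removes; assembling the four values $c\in\{1,3,7,9\}$ and clearing the $\tfrac{1}{100}$ against the $\tfrac14$ produces the displayed combination of $\zeta(2,1/10),\zeta(2,9/10),\zeta(2,3/10),\zeta(2,7/10)$ together with the additive constant contributed by those removed terms.

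The step I expect to be most delicate is the bookkeeping that fixes this additive constant, alongside the uniformity needed in the tail. For the constant, one must apply $\sum_{k\ge 1}(k+c)^{-2}=\zeta(2,c)-c^{-2}$ and track exactly which smallest representatives $a\in\{1,3,7,9\}$ survive the constraint $a\ge 2$, since only the survivors enter the series; this is precisely where an off-by-one in the lower limit shifts the constant subtracted from the four Hurwitz values. For the error control, the estimate $O(x/\log^2 x)$ degrades as $x=n/a^2$ shrinks, so one has to confirm, exactly as in Theorem 4.1, that the truncation at $A=\log^2 n$ keeps every invoked $x$ large enough for Siegel--Walfisz to apply with a usable constant and that the accumulated head error stays within $O(n/\log^2 n)$. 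Once both points are secured, adding the two cases gives the claimed asymptotic.
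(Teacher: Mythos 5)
Your plan is the same as the paper's: restrict to the two admissible cases ($p\equiv 1$ with $a\equiv 1,9$ and $p\equiv 9$ with $a\equiv 3,7$ modulo $10$), apply the prime number theorem in arithmetic progressions to extract the factor $\tfrac14$, and re-index $a=10k+c$ to express the class sums through the Hurwitz zeta function. Your handling of the truncation at $A=\log^2 n$ and of the tail is in fact more careful than the paper's, which passes directly from the $\pi$-sum to the main term without discussing uniformity.

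The gap is precisely the step you flag as delicate and then leave uncomputed: the additive constant. Carrying out your own prescription $\sum_{k\ge1}(k+c)^{-2}=\zeta(2,c)-c^{-2}$ with the paper's normalization $\zeta(s,c)=\sum_{k\ge0}(k+c)^{-s}$, the only representative removed by the constraint $a\ge 2$ is $a=1$ in the class $c=1$, so that class contributes $\frac{1}{100}\zeta(2,1/10)-1$ while the classes $c=3,7,9$ contribute $\frac{1}{100}\zeta(2,c/10)$ in full. Assembling, the main term is $\frac{1}{400}\frac{n}{\log n}\bigl(\zeta(2,1/10)+\zeta(2,3/10)+\zeta(2,7/10)+\zeta(2,9/10)-100\bigr)$, which equals $\frac14\bigl(\frac{18}{25}\zeta(2)-1\bigr)\frac{n}{\log n}$: the constant is $-100$, not $-4$. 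The paper's $-4$ comes from subtracting $1$ for each truncated series instead of $c^{-2}$ (which is $100$ for $c=1/10$), compounded by an off-by-one that also drops $a=9$. A sanity check confirms $-4$ cannot be correct: since the four Hurwitz values sum to $72\zeta(2)\approx 118.4$, the constant $-4$ would give about $0.286\,\frac{n}{\log n}$ for each of the last digits $1,3,7,9$, totalling about $1.14\,\frac{n}{\log n}$ and exceeding the overall SP count $(\zeta(2)-1)\frac{n}{\log n}\approx 0.645\,\frac{n}{\log n}$ from Theorem 4.1. So your method is sound, but completed correctly it proves the formula with $-100$ in place of $-4$; to obtain the displayed statement you would have to reproduce the paper's arithmetic slip.
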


Using the process employed above, we find the same asymptotic estimate for SP numbers ending in 3, 7, and 9.

\medskip
\noindent
{\textbf{Acknowledgements.}}
I would like to acknowledge the professors, at the University of Illinois for their valuable advice and support. I also appreciate the comments from the editor, especially the guidance for proving the results about the last digits of SP numbers.

%-------------------------------------------------------

\end{document}